\documentclass[11pt,reqno]{amsart}
\usepackage[a4paper, hmargin={2.7cm,2.7cm},vmargin={3.3cm,3.3cm}]{geometry}
\usepackage[english]{babel}
\usepackage{color}
\usepackage[noadjust]{cite}
\usepackage{amsmath}
\usepackage{amssymb}
\usepackage{amsfonts}
\usepackage{amsthm}
\usepackage{enumerate}
\usepackage{amsthm}
\usepackage{dsfont}

\usepackage[textwidth=20mm]{todonotes}
\usepackage[hypertexnames=false]{hyperref}
\usepackage[nameinlink]{cleveref}
\usepackage{commath}

\usepackage{etoolbox}

\usepackage{amssymb}   
\usepackage{amsthm}    
\usepackage{thmtools}  
\usepackage{mathtools} 
\usepackage{mathrsfs}  
\usepackage{commath}
\usepackage{bbm}
\usepackage[textwidth=20mm]{todonotes}

\theoremstyle{plain}
\newtheorem{theorem}{Theorem}[section]
\newtheorem{lemma}[theorem]{Lemma}

\theoremstyle{definition}

\newtheorem{examplex}[theorem]{Example}
\newenvironment{example}
  {\pushQED{\qed}\examplex}
  {\popQED\endexamplex}

\theoremstyle{remark}
\newtheorem{remark}[theorem]{Remark}

\usepackage[all]{xy}

\numberwithin{equation}{section}

\newcommand{\Hpi}{\mathcal{H}_{\pi}}

\DeclareMathOperator{\covol}{covol}

\DeclareMathOperator{\Span}{span}

\newcommand{\Ad}{{\rm Ad}}
\newcommand{\Aut}{{\rm Aut}\,}
\newcommand{\ad}{{\rm ad}}

\newcommand{\ee}{{\rm e}}

\newcommand{\Hc}{\mathcal H}

\title{Coherent frames with zero Beurling density}

\author{Ingrid Belti\c t\u a}
\address{Institute of Mathematics "Simion Stoilow" of the Romanian Academy, PO Box 1-764, 014700 Bucharest, Romania}
\email{ingrid.beltita@gmail.com, Ingrid.Beltita@imar.ro}

\author{Jordy Timo van Velthoven}
\address{Faculty of Mathematics,
University of Vienna, 
Oskar-Morgenstern-Platz 1,
1090 Vienna, Austria}
\email{jordy-timo.van-velthoven@univie.ac.at}

\makeatletter
\@namedef{subjclassname@2020}{\textup{2020} Mathematics Subject Classification}
\makeatother

\subjclass[2020]{22E25, 22E27, 43A80, 46B15}

\keywords{Beurling density, coherent states, frame, solvable Lie group}

\begin{document}

\begin{abstract}
We show the existence of a coherent frame in the orbit of a connected, simply connected unimodular solvable Lie group of exponential growth for which the lower Beurling density of its index set is zero.   
\end{abstract}

\maketitle
\section{Introduction}
Let $G$ be a second countable locally compact group and let $(\pi, \Hpi)$ be an irreducible, square-integrable projective unitary representation of $G$. For a vector $\eta \in \Hpi$ and a discrete subset $\Gamma \subseteq G$, a system of the form
\[
\pi(\Gamma) \eta = \big\{ \pi(\gamma) \eta : \gamma \in \Gamma \big\}
\]
is often referred to as a \emph{coherent state subsystem} or simply a \emph{coherent system}. 

A natural question on coherent systems that goes back to at least \cite{perelomov1972coherent}
is to derive quantitative necessary conditions on the index set $\Gamma \subseteq G$ for the system $\pi(\Gamma) \eta$ to have certain spanning/reproducing properties, such as completeness or orthogonality. For index sets forming a lattice $\Gamma$, necessary conditions on the lattice covolume $\covol(\Gamma)$ for $\pi(\Gamma) \eta$ being a complete system or orthogonal system have been obtained in, e.g., \cite{bekka2004square, romero2022density}. Similar results for frames and Riesz sequences associated to general index sets $\Gamma \subseteq G$ in a unimodular amenable group $G$ have been obtained in terms of the lower and upper Beurling densities $D^-(\Gamma)$ and $D^+(\Gamma)$ of $\Gamma$, respectively, in the papers \cite{papageorgiou2025counting, enstad2025coherent, caspers2023overcompleteness, fuehr2017density, mitkovski2020density, enstad2025dynamical}. In particular, the following necessary density condition for frames $\pi(\Gamma) \eta$ satisfying a mild localisation assumption $\eta \in \mathcal{B}_{\pi}$ (see \cite{grochenig2008homogeneous} and \Cref{bpi})
has been obtained via different methods in \cite{enstad2025coherent, papageorgiou2025counting, caspers2023overcompleteness, enstad2025dynamical}. See Section \ref{sec:prelim} for the precise definitions involved.

\begin{theorem} \label{thm:density}
    Let $G$ be a second countable unimodular amenable group. Let $(\pi, \Hpi)$ be an irreducible, square-integrable projective unitary representation of $G$ of formal degree $d_{\pi} > 0$. If there exist $\Gamma \subseteq G$ and $\eta \in \mathcal{B}_{\pi}$ such that $\pi (\Gamma) \eta$ is a frame for $\Hpi$, then
    \[
    D^- (\Gamma) \geq d_{\pi}. 
    \]
\end{theorem}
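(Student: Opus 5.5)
We follow the trace-comparison (``homogeneous approximation'') strategy from the density theory of Landau and Ramanathan--Steger, adapted to amenable groups via Følner sets. Since $\pi(\Gamma)\eta$ is a frame, it has a canonical dual frame $\{\tilde\eta_\gamma\}_{\gamma\in\Gamma}$ with $f=\sum_\gamma \langle f,\tilde\eta_\gamma\rangle \pi(\gamma)\eta$. Using amenability we pick a Følner sequence $(K_n)$ of compact sets. Because $D^-(\Gamma)$ is defined as a $\liminf$ over translates $K_n x$ of $\#(\Gamma\cap K_n x)/\mu(K_n)$, it suffices to show that for every $\varepsilon>0$ and all large $n$, uniformly in $x\in G$,
\[
\#(\Gamma\cap K_n^{+} x)\geq (1-\varepsilon)\,d_\pi\,\mu(K_n) - o(\mu(K_n)),
\]
where $K_n^{+}=K_nU$ is a slight enlargement by a fixed compact neighbourhood $U$. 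The Følner property lets us replace $K_n^+$ by $K_n$ at cost $o(\mu(K_n))$.

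The first step introduces localisation operators. For a compact set $K\subseteq G$ let $T_K=d_\pi\int_K \pi(g)\eta_0\otimes\overline{\pi(g)\eta_0}\,d\mu(g)$, with $\eta_0$ a normalised admissible vector (one can take $\eta_0=\eta$ after normalising). By the orthogonality relations $\|\eta_0\|=1$, so $0\le T_K\le I$ and $\operatorname{tr} T_K=d_\pi\mu(K)$. The key quantitative input is $\operatorname{tr}(T_K-T_K^2)=o(\mu(K))$ along Følner sequences. We get this by writing $\operatorname{tr}(T_K^2)=d_\pi^2\int_K\int_K|\langle\pi(g)\eta_0,\pi(h)\eta_0\rangle|^2\,dg\,dh$ and using $d_\pi\int_G|V_{\eta_0}\eta_0|^2=1$ together with the Følner property; unimodularity enters here and in the trace computation. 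As a consequence, the number of eigenvalues of $T_{K_n}$ exceeding $1-\delta$ is at least $d_\pi\mu(K_n)(1-o(1))$.

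The second step, which is the main obstacle, is the \emph{homogeneous approximation property}: for every $\varepsilon>0$ there is a compact $Q\subseteq G$ such that for all $x\in G$,
\[
\operatorname{dist}\bigl(\pi(x)\eta_0,\ \operatorname{span}\{\pi(\gamma)\eta:\gamma\in\Gamma\cap Qx\}\bigr)<\varepsilon .
\]
This is where $\eta\in\mathcal{B}_\pi$ is used. Write $\pi(x)\eta_0=\sum_\gamma\langle\pi(x)\eta_0,\tilde\eta_\gamma\rangle\pi(\gamma)\eta$. The tail over $\gamma\notin Qx$ has to be small uniformly in $x$. For this I would show that the dual frame inherits localisation, so that the coefficients $|\langle\pi(x)\eta_0,\tilde\eta_\gamma\rangle|$ are dominated by an envelope $H(\gamma x^{-1})$ with $H$ in a Wiener amalgam space $W(L^\infty,\ell^1)$ or $W(L^\infty,L^2)$. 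This domination would come from a Wiener-type lemma for the frame operator, or, more simply, from writing the coefficients via the matrix $\langle\pi(\gamma')\eta,\pi(\gamma)\eta\rangle$, whose off-diagonal decay is controlled by $V_\eta\eta\in W(L^\infty,L^1)$ and the relative separatedness of $\Gamma$. If the inverse of the frame operator resists this treatment, I would first try the argument of the Heil--Ramanathan--Topiwala type proof for Gabor frames, which only requires the localisation of $\eta$ itself, not that of the dual frame.

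The final step is a dimension count. Let $P$ be the orthogonal projection onto $W=\operatorname{span}\{\pi(\gamma)\eta:\gamma\in\Gamma\cap K_nQ\}$, so that $\dim W\le\#(\Gamma\cap K_nQ)$. Homogeneous approximation gives $\|(I-P)\pi(x)\eta_0\|<\varepsilon$ for all $x\in K_n$. Then
\[
\operatorname{tr}(T_{K_n})-\operatorname{tr}(PT_{K_n}P)=d_\pi\int_{K_n}\|(I-P)\pi(x)\eta_0\|^2\,dx\le\varepsilon^2 d_\pi\mu(K_n),
\]
and $\operatorname{tr}(PT_{K_n}P)\le\dim W$. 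Hence $\#(\Gamma\cap K_nQ)\ge(1-\varepsilon^2)d_\pi\mu(K_n)$. Taking translates $K_nx$ gives the same bound uniformly in $x$. Absorbing $K_nQ$ versus $K_n$ via the Følner property, then letting $n\to\infty$ and $\varepsilon\to0$, yields $D^-(\Gamma)\ge d_\pi$.

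The projective (cocycle) nature of $\pi$ only introduces unimodular phases, which do not affect any of the norms or traces above.
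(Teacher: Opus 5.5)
The paper does not prove this theorem; it quotes it from the cited literature, so your argument has to stand on its own. Its skeleton is the right Landau-type scheme: the localisation operators $T_K$, a projection $P$ onto a space $W$ of dimension at most $\#(\Gamma\cap yK_nL)$, the identity $\operatorname{tr}(T_K)-\operatorname{tr}(PT_KP)=d_\pi\int_K\|(I-P)\pi(x)\eta_0\|^2\,dx$, and the bound $\operatorname{tr}(PT_KP)\le\dim W$. The eigenvalue count of Step~1 is not even needed. The gap is in Step~2 as you formulate it. You span the \emph{frame} vectors $\pi(\gamma)\eta$ and expand $\pi(x)\eta_0$ with the coefficients $\langle\pi(x)\eta_0,\tilde\eta_\gamma\rangle$. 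That requires localisation of the canonical dual frame $\tilde\eta_\gamma=S^{-1}\pi(\gamma)\eta$, uniformly in $x$. For an irregular $\Gamma$ the frame operator $S$ does not commute with $\pi(\Gamma)$, so the dual frame is not coherent, and the hypothesis $\eta\in\mathcal{B}_\pi$ gives no direct control over it. The tools you suggest do not supply this control:
\begin{itemize}
\item $\mathcal{B}_\pi$ is an $L^2$-amalgam condition (the local maximal function of $\langle f,\pi(\cdot)\eta\rangle$ lies in $L^2$). It does not give $V_\eta\eta\in W(L^\infty,L^1)$.
\item Even $\ell^1$-type off-diagonal decay of the Gram matrix would pass to $S^{-1}$ only through a Wiener-type inverse-closedness theorem. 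Such theorems need extra structure, such as polynomial growth or symmetry of the relevant convolution algebras, which is not available for general unimodular amenable groups.
\item The Gabor-type fallback is not spelled out, and Gabor arguments exploit the abelian structure of phase space, which is exactly what is missing here.
\end{itemize}

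The missing idea is to swap the roles of the frame and its dual, as in Landau's original argument. Take $W=\Span\{\tilde\eta_\gamma:\gamma\in\Gamma\cap yK_nL\}$, which still satisfies $\dim W\le\#(\Gamma\cap yK_nL)$. Use the expansion $f=\sum_\gamma\langle f,\pi(\gamma)\eta\rangle\tilde\eta_\gamma$ together with the Bessel bound $A^{-1}$ of the dual frame, where $A$ is the lower frame bound of $\pi(\Gamma)\eta$. For $x\in yK_n$ this gives
\[
\|(I-P)\pi(x)\eta_0\|^2\le A^{-1}\sum_{\gamma\in\Gamma\setminus xL}|\langle\eta_0,\pi(x^{-1}\gamma)\eta\rangle|^2 .
\]
The set $x^{-1}\Gamma$ is relatively separated with constants independent of $x$; relative separatedness itself follows from the upper frame bound. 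Hence, for $L=L_0Q$, the right-hand side is at most a constant times $\int_{G\setminus L_0}\sup_{q\in Q}|\langle\eta_0,\pi(zq)\eta\rangle|^2\,dz$. This tends to $0$ as $L_0$ exhausts $G$, uniformly in $x$, and this is exactly where $\eta\in\mathcal{B}_\pi$ enters. The rest of your trace argument then goes through.

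Mind the sides as well. Since $|\langle\pi(x)\eta_0,\pi(\gamma)\eta\rangle|=|\langle\eta_0,\pi(x^{-1}\gamma)\eta\rangle|$, localisation is in $x^{-1}\gamma$. You must therefore use neighbourhoods $xQ$, translates $yK_n$ and enlargements $yK_nQ$, matching the paper's right F\o lner condition. With $Qx$, $H(\gamma x^{-1})$ and $K_nx$ as you wrote them, your final step needs the neighbourhoods of points $x\in K_n$ to lie in $K_nQ$, and this fails for nonabelian $G$.
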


For abelian groups (more generally, IN groups), it is known that \Cref{thm:density} also holds without the localisation assumption $\eta \in \mathcal{B}_{\pi}$, see, e.g., \cite{ramanathan1995incompleteness, enstad2025dynamical}. 
It is an open question whether this assumption can be removed from \Cref{thm:density} in general; see  \cite[Question 10]{velthoven2024density}.

The aim of this note is to show the existence of coherent frames for which the lower Beurling density of their index set is zero. In particular, this shows that the localisation assumption cannot be removed from Theorem \ref{thm:density}. In addition, it shows that various results obtained in \cite{enstad2025coherent, papageorgiou2025counting, caspers2023overcompleteness, enstad2025dynamical}, such as the stability of frames under weak limits of translates \cite{enstad2025dynamical} or the frame measure formula \cite{caspers2023overcompleteness}, do not hold without the localisation assumption.

The following is the main result obtained in this note.

\begin{theorem} \label{thm:zerodensity_intro}
Let $(\pi, \Hpi)$ be an irreducible, square-integrable projective representation of a connected, simply connected unimodular solvable Lie group $G$ of exponential growth. Then there exists $\eta \in \Hpi$ and a set $\Gamma \subseteq G$ with $D^- (\Gamma) = 0$ such that $\pi(\Gamma) \eta$ is a frame for $\Hpi$.
\end{theorem}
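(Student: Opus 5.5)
Since $G$ has exponential growth, it is non-amenable? No: solvable groups are amenable. Exponential growth means Følner sets must be large relative to balls, and translates of balls have boundary comparable to their volume. The natural plan is to take a set $\Gamma$ that is relatively dense on a "thin" part of $G$ and sparse elsewhere, yet still has the frame property for a suitably non-localised vector $\eta$.

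The construction proceeds in four steps.

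\emph{Step 1: start from a frame.} Begin with any relatively separated, relatively dense set $\Lambda\subseteq G$ and a vector $\eta_0\in\mathcal B_\pi$ such that $\pi(\Lambda)\eta_0$ is a frame for $\Hpi$. Such a frame exists by the discretisation results of Feichtinger--Gröchenig type for square-integrable representations.

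\emph{Step 2: find sparse regions.} Using exponential growth, choose a Følner sequence $(F_n)$ with $F_n\subseteq G$. Then pick regions $K_n=g_nB_{r_n}$, which are balls of radius $r_n\to\infty$ placed far apart. For every fixed compact set $K$, the supremum over $x$ of $|\Gamma\cap xK|/|K|$ is not the relevant quantity; the lower density is
\[
D^-(\Gamma)=\liminf_n \inf_x \frac{|\Gamma\cap xF_n|}{|F_n|}.
\]
So it suffices to make $\Gamma$ empty on translates $x_nF_n$ for each $n$, that is, to remove from $\Lambda$ a union of translates $x_nF_n$ of the Følner sets.

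\emph{Step 3: restore completeness.} Set $\Gamma=\Lambda\setminus\bigcup_n x_nF_n$. With holes this large, $\pi(\Gamma)\eta_0$ will fail to be a frame, because wavepackets concentrated in the holes are missed. The fix is to replace $\eta_0$ by a non-localised vector
\[
\eta=\sum_k c_k\pi(h_k)\eta_0,
\]
where the $h_k$ are chosen so that $\gamma h_k$ ranges over $\Lambda$ for $\gamma$ in a fixed non-empty piece $\Gamma_0\subseteq\Gamma$. The intended effect is that $\pi(\Gamma_0)\eta$ "sees" all of $\Lambda$. The natural route is the exponential-growth structure: write $G=N\rtimes A$ (or use a closed subgroup along which the modular structure of an ax+b-type quotient $\mathbb R^n\rtimes\mathbb R$ acts by dilations). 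The exponential distortion lets a thin set, roughly a neighbourhood of a subgroup $A$ together with a net in $N$ at one scale, cover $G$ through a single vector that spreads across all scales. This is the phenomenon seen in wavelet frames for the $ax+b$ group: a wavelet system indexed by a "scale-thin" set can still be a frame once the mother wavelet is chosen appropriately.

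\emph{Step 4: verify the frame bounds.} Check that $\pi(\Gamma)\eta$ has both frame bounds. The lower bound comes from $\Gamma_0$ together with the frame property of Step 1. The upper bound requires controlling the sum over $k$, which is done by choosing $c_k$ decaying fast and using the relative separation of $\Lambda$. Since $\Gamma$ has huge holes along Følner sets, $D^-(\Gamma)=0$.

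The main obstacle is Step 3: producing a single vector $\eta$ whose orbit under a density-zero set is still a frame. The first attempt would reduce to the $ax+b$ quotient, where there is an explicit model: the index set is a sequence of dyadic scales with translations, and the holes are Følner sets that are exponentially long in the translation variable. I would then lift the result through the induced-representation (Kirillov orbit) description of $\pi$, which is the step most likely to need care.
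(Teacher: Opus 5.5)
Steps~1--2 are harmless, but Steps~3--4 are the whole content of the theorem, and the mechanism you sketch for them fails. The lower bound cannot ``come from $\Gamma_0$ together with the frame property of Step 1''. For each $\gamma$ one has $\langle f,\pi(\gamma)\eta\rangle=\sum_k \overline{c_k\sigma(\gamma,h_k)}\,\langle f,\pi(\gamma h_k)\eta_0\rangle$, which is a single weighted sum of frame coefficients. The inequality $\sum_{\lambda\in\Lambda}|\langle f,\pi(\lambda)\eta_0\rangle|^2\geq A\|f\|^2$ gives no lower bound for sums of squares of such combinations, since they may cancel. If $\Gamma_0$ is a fixed finite piece, $\pi(\Gamma_0)\eta$ is a finite family and cannot satisfy a lower frame inequality on the infinite-dimensional space $\mathcal{H}_\pi$ at all.

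Your upper-bound device is self-defeating. Cover $h^{-1}Qh$ by $m(h)$ translates $p_1Q,\dots,p_{m(h)}Q$. Then $\sup_{q\in Q}|\langle f,\pi(xqh)\eta_0\rangle|\leq\sum_i\sup_{q\in Q}|\langle f,\pi(xhp_iq)\eta_0\rangle|$. By unimodularity and Minkowski's inequality, $\eta\in\mathcal{B}_\pi$ as soon as $\sum_k|c_k|\,m(h_k)<\infty$, and then Theorem~\ref{thm:density} forces $D^-(\Gamma)\geq d_\pi>0$. So a density-zero frame needs a window that is non-localised for a structural reason, and you do not supply one. Reducing to an $ax+b$ \emph{quotient} is also misdirected. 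A square-integrable $\pi$ cannot factor, even projectively, through $G/N$ with $N$ noncompact, because $|\langle\eta,\pi(\cdot)\eta\rangle|$ would be constant on $N$-cosets.

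The missing idea is to work with a closed \emph{subgroup} inside the regular representation, as the paper does.
\begin{itemize}
\item After replacing $\sigma$ by a similar analytic cocycle, pass to the central extension $G_\sigma=G\times\mathbb{T}$. There $\pi_\sigma(x,u)=u\pi(x)$ is a genuine square-integrable representation, so $C_\varphi$ embeds $\pi_\sigma$ isometrically and equivariantly into $\lambda_{G_\sigma}$.
\item Exponential growth gives a closed subgroup $H\leq G_\sigma$ isomorphic to the affine or Gr\'elaud group, necessarily contained in $G\times\{1\}$.
\item Since $H$ is nonunimodular, $\lambda_H$ has an admissible vector $F_0$ (F\"uhr). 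The continuous Parseval frame $\lambda_H(H)F_0$ discretises to a frame $\lambda_H(\Lambda)F_0$ with $\Lambda\subseteq H$ (Freeman--Speegle).
\item Since $\lambda_H$ has infinite multiplicity, $\lambda_{G_\sigma}|_H\simeq\lambda_H$ via a unitary $U$ (Herz's restriction theorem plus quasi-equivalence). Hence $\lambda_{G_\sigma}(\Lambda)UF_0$ is a frame for $L^2(G_\sigma)$.
\item Compressing by $P=C_\varphi C_\varphi^*$ and applying $C_\varphi^*$ gives the frame $\pi(\Gamma)\eta$ with $\Gamma=j(\Lambda)$ and $\eta=C_\varphi^*PUF_0$.
\end{itemize}
Zero density then needs no holes: $\Gamma$ lies in the closed subgroup $j(H)$, and $G/j(H)$ is noncompact, so $G\neq\Gamma K$ for every compact $K$. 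Your $ax+b$ wavelet heuristic points in this direction. The operative step, though, is transplanting a wavelet frame of a subgroup into $\lambda_{G_\sigma}$ via the restriction theorem, not assembling $\eta$ as a decaying superposition of localised translates.
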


We mention that the proof of \Cref{thm:zerodensity_intro} shows that the set $\Gamma \subseteq G$ can be chosen to be contained in a subgroup of $G$ of dimension at most $3$. 

Our proof of \Cref{thm:zerodensity_intro} is based on similar techniques as those used for the construction of frames of translates for nonnilpotent exponential solvable Lie groups in \cite{fuehr2022groups}. 
Specifically, as in \cite{fuehr2022groups}, we use that any such group admits a nonunimodular closed subgroup for which there exists a continuous frame of translates \cite{fuehr2002admissible} and combine this with the fact that any such continuous frame can be discretised to obtain a discrete frame \cite{freeman2019discretization}. We note, however, that \Cref{thm:zerodensity_intro} cannot  simply be obtained as a direct consequence (see, e.g., \cite[Thm. 2.11]{fuehr2022groups}) of the main results in \cite{fuehr2022groups} as these are only known for genuine regular representations (i.e., not for general projective regular representations) and since unimodular connected, simply connected solvable Lie groups do not admit irreducible (genuine) unitary representations that are square-integrable in the strict sense, cf. \cite[Cor. 4.2]{beltita2025square}. To circumvent these obstructions, we carefully use the correspondence between projective representations of a given group and representations of an associated central extension of the group and exploit the aforementioned results on frames of translates for the latter group.

The note is organised as follows. Section \ref{sec:prelim} provides the basic background on square-integrable projective representations, Beurling densities and frames. The proof of \Cref{thm:zerodensity_intro} is carried out in Section \ref{sec:proof}. Lastly, we provide in Section \ref{sec:examples}  a class of solvable Lie groups to which \Cref{thm:zerodensity_intro} applies. 

\section{Coherent systems over point sets in amenable groups} \label{sec:prelim}
Throughout this section, we let $G$ be a unimodular second countable group with Haar measure $\mu_G$. 

\subsection{Square-integrable projective representations} \label{sec:projective}
A projective unitary representation $(\pi, \Hpi)$ of $G$ on the separable Hilbert space $\Hc_\pi$ is a Borel map $\pi\colon G \to \mathcal{U}(\Hpi)$ satisfying $\pi(e) = I_{\Hpi}$ and such that
\[
\pi(x)\pi(y) = \sigma(x,y) \pi(xy) \quad \text{for all} \quad x, y \in G,
\]
for some Borel function $\sigma \colon G \times G \to \mathbb{T}$, called the \emph{cocycle} of $\pi$, that satisfies 
the cocycle conditions
\begin{eqnarray*}
    \sigma(x, e) =\sigma (e, x) =1 \quad \text{and} \quad
    \sigma(x, yz) \sigma(y, z) = \sigma (xy, z) \sigma(x, y), 
\end{eqnarray*}
for all $x, y, z\in G$. 

A projective unitary representation with cocycle $\sigma$ will also simply be referred to as a $\sigma$-representation. For any $\eta_1, \eta_2 \in \Hpi$, the absolute value $|\langle \eta_1, \pi(\cdot) \eta_2 \rangle | \colon G \to [0, \infty)$ is continuous by \cite[Lem. 7.1 and Thm. 7.5]{varadarajan1985geometry}. Moreover, since the unitary group of $\Hc_\pi$, with the weak (or, equivalently, strong) topology is a second countable topological group, any $1$-representation is automatically strongly continuous, cf. \cite[Lem. 5.28]{varadarajan1985geometry}. 

The projective unitary representation $(\pi, \Hpi)$ is said to be \emph{irreducible} if $\{0\}$ and $\Hpi$ are the only closed $\pi$-invariant subspaces. 
An irreducible projective representation $(\pi, \Hpi)$ is called \emph{square-integrable} if there exists $\eta \in \Hpi \setminus \{0\}$ such that
\begin{align} \label{eq:SI}
\int_G |\langle \eta, \pi(x) \eta \rangle |^2 \; d\mu_G (x) < \infty. 
\end{align}
In this case, there exists $d_{\pi} > 0$, called the \emph{formal degree} of $\pi$, such that
\[
\int_G |\langle f_1, \pi(x) f_2 \rangle |^2 \; d\mu_G (x) = d_{\pi}^{-1} \| f_1 \|^2 \| f_2 \|^2
\]
for all $f_1, f_2 \in \Hpi$; see, e.g., \cite[Thm. 2]{aniello2006square}. 

We mention that instead of working with $\sigma$-representations that are square-integrable in the strict sense of \Cref{eq:SI},  
one could alternatively work with $1$-representations that are square-integrable modulo the centre or projective kernel. Any such latter representation can, however, be treated
as a projective representation of the corresponding quotient quotient and is then square-integrable in the strict sense of
 \Cref{eq:SI}. Indeed, if $(\rho, \mathcal{H}_{\rho})$ is an irreducible $1$-representation of $G$ that is square-integrable modulo $Z(G)$ and $s : G/Z(G) \to G$ is a Borel cross-section, then $\pi := \rho \circ s$ is an irreducible, square-integrable projective representation of $G/Z(G)$,
 see, e.g., \cite[Prop. 3]{aniello2006square}.  
 It is worth noting that square-integrable $1$-representations do not exist for connected,
simply connected unimodular solvable Lie groups \cite[Cor. 4.2]{beltita2025square}.

\subsection{Beurling density}
A (right) \emph{strong F\o lner sequence} in $G$ is a sequence $(K_n)_{n \in \mathbb{N}}$ of nonnull compact sets $K_n \subseteq G$ satisfying, for any compact set $K \subseteq G$,
\[
\lim_{n \to \infty} \frac{\mu_G (K_n K \cap K_n^c K)}{\mu_G (K_n)} = 0. 
\]
A strong F\o lner sequence exists in any unimodular amenable group. Conversely, if $G$ is a second countable locally compact group admitting a strong F\o lner sequence, then it must be unimodular and amenable. See \cite[Prop. 11.1]{tessera2008large} for both claims; see also \cite[Prop. 5.10]{pogorzelski2022leptin}. We recall that any solvable locally compact group is amenable.

Given a discrete set $\Gamma \subseteq G$, its \emph{lower Beurling density} is defined by
\[
D^- (\Gamma) := \lim_{n \to \infty} \inf_{x \in G} \frac{\# (\Gamma \cap xK_n)}{\mu_G (K_n)},
\]
where $(K_n)_{n \in \mathbb{N}}$ is a strong F\o lner sequence in $G$. It is shown in \cite[Prop. 5.14]{pogorzelski2022leptin} that the Beurling density is independent of the choice of F\o lner sequence.

\subsection{Coherent frames}
A coherent system $\pi(\Gamma) \eta$ is said to be a \emph{frame} for $\Hpi$ if there exist $0<A\leq B < \infty$ such that
\[
A \| f \|^2 \leq \sum_{\gamma \in \Gamma} |\langle f, \pi(\gamma) \eta \rangle |^2 \leq B \| f \|^2 
\]
for all $f \in \Hpi$. 

A common condition in the study of coherent frames $\pi(\Gamma) \eta$ is that $\eta \in \Hpi$ belongs to the space
\begin{equation}\label{bpi}
\mathcal{B}_{\pi} := \bigg\{ \eta \in \Hpi : \int_G \sup_{q \in Q} |\langle f, \pi (x q) \eta \rangle |^2 \; d\mu_G (x) < \infty, \quad \text{for all} \quad f \in \Hpi \bigg\}, 
\end{equation}
where $Q$ is some fixed relatively compact unit neighbourhood, see, e.g., \cite{caspers2023overcompleteness, papageorgiou2025counting, enstad2025coherent, fuehr2017density, enstad2025dynamical}. The space $\mathcal{B}_{\pi}$ can be shown to be norm dense, see \cite{grochenig2008homogeneous}.
If $\pi(\Gamma) \eta$ is a frame for $\Hpi$ with $\eta \in \mathcal{B}_{\pi}$, then necessarily $G = \Gamma K$ for some compact set $K \subseteq G$, cf. \cite[Thm. 3.9]{enstad2025dynamical}.

\section{Proof of the main result}
\label{sec:proof}

This section is devoted to providing a proof of \Cref{thm:zerodensity_intro}. 
We start by recalling some preliminary results.

\subsection{Restriction of regular representations}
A key ingredient in proving \Cref{thm:zerodensity_intro} is given by the following theorem, cf.\cite[Proof of Thm. 3.5]{fuehr2022groups}. 
We provide another short proof for this result below.

\begin{theorem}[\cite{fuehr2022groups}] \label{thm:multiplicity}
    Let $G$ be a second countable locally compact group.
    Let $H \leq G$ be a closed subgroup such that its regular representation $\lambda_H$ has infinite multiplicities.
    Then $\lambda_H$ is unitarily equivalent to the restriction $\lambda_G|_{H}$ of the regular representation $\lambda_G$ of $G$.
\end{theorem}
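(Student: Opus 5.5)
The plan is to decompose $\lambda_G|_H$ along the coset space and then absorb the resulting multiplicity using the hypothesis on $\lambda_H$.

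\textbf{Step 1.} Since $G$ is second countable and $H$ is closed, there is a Borel cross-section $s\colon H\backslash G\to G$ for the right cosets $Hx$. This gives a Borel isomorphism
\[
H\times H\backslash G\to G,\qquad (h,\dot x)\mapsto h\,s(\dot x).
\]

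\textbf{Step 2.} Transport Haar measure $\mu_G$ through this isomorphism. The pushforward to $H\backslash G$ of a measure equivalent to $\mu_G$ is a quasi-invariant measure class $\nu$. Disintegrating along the fibres $Hs(\dot x)$, the transported measure is equivalent to a measure of the form
\[
\int_{H\backslash G} \mu_H\, d\nu(\dot x),
\]
up to a positive Borel density. The point is that the left $H$-action on $G$ corresponds to left translation in the first coordinate. Each fibre measure is the image of a left Haar measure of $H$ under right translation by $s(\dot x)$, and right-translating a left Haar measure gives a left Haar measure again. Rescaling by the square root of the Radon--Nikodym density yields a unitary
\[
U\colon L^2(G)\to L^2(H\times H\backslash G,\ \mu_H\otimes\nu)\cong L^2(H)\otimes L^2(H\backslash G,\nu).
\]
This unitary intertwines $\lambda_G|_H$ with $\lambda_H\otimes I$. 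This is the identification I expect to need the most care: the measure-theoretic bookkeeping of the Radon--Nikodym factors, and checking that the density can be chosen so that left $H$-translation acts unitarily. The cleanest route is probably the standard formula $\int_G f\,d\mu_G=\int_{H\backslash G}\int_H f(hs(\dot x))\,\rho\,d\mu_H\,d\nu$, with a rho-function $\rho$ handling the modular discrepancy. Since $H$ acts only on the first variable, the factor can be absorbed.

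\textbf{Step 3.} Conclude that $\lambda_G|_H\simeq \lambda_H\otimes I_{\mathcal K}$ with $\mathcal K=L^2(H\backslash G,\nu)$, that is, $\lambda_G|_H\simeq m\cdot\lambda_H$ where $m=\dim\mathcal K\in\{1,2,\dots,\infty\}$. Since $\mathcal K$ is separable, $m\le\aleph_0$.

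\textbf{Step 4.} Use the infinite-multiplicity hypothesis. It means $\lambda_H\simeq \infty\cdot\sigma$ for some representation $\sigma$, hence $\lambda_H\simeq\infty\cdot\lambda_H$. Then, using $m\cdot\infty=\infty$ for $1\le m\le\aleph_0$,
\[
m\cdot\lambda_H\simeq m\cdot\infty\cdot\lambda_H\simeq\infty\cdot\lambda_H\simeq\lambda_H,
\]
which gives $\lambda_G|_H\simeq\lambda_H$. If "infinite multiplicities" is meant in the direct-integral sense, namely that $\lambda_H$ has uniform multiplicity $\infty$ over its spectral decomposition, the same absorption holds fibrewise.
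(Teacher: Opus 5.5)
Your proof is correct, but it takes a different route from the paper's. The paper never decomposes $L^2(G)$. It invokes Herz's restriction theorem $A(G)|_H = A(H)$ and, via Arsac, concludes that $\lambda_G|_H$ and $\lambda_H$ are quasi-equivalent. Dixmier's criterion then gives $N\cdot\lambda_H\simeq M\cdot(\lambda_G|_H)$ for some $N,M\le\aleph_0$. Finally it absorbs $N$ and $M$ using infinite multiplicity on \emph{both} sides, deducing the one for $\lambda_G|_H$ from $\lambda_G\simeq\mathrm{ind}_H^G\lambda_H$.

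You instead prove the sharper, classical fact $\lambda_G|_H\simeq m\cdot\lambda_H$ with $m=\dim L^2(H\backslash G,\nu)$, using a Borel cross-section and a Weil-type integration formula. The hypothesis is then needed only once, to absorb $m$ into $\lambda_H$. What your route buys is that it is elementary and self-contained: no Fourier algebras, no quasi-equivalence theory, no separate multiplicity argument for $\lambda_G|_H$, and it identifies the multiple explicitly. The paper's route is shorter on the page but rests on the much deeper theorem of Herz.

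On the step you flagged, no $h$-dependent density actually arises. Under $(h,\dot x)\mapsto h\,s(\dot x)$, left translation by $h_0\in H$ on $G$ corresponds exactly to $(h,\dot x)\mapsto(h_0h,\dot x)$, and $\mu_G$ is left invariant. So the transported measure is exactly of the form $\mu_H\otimes\nu'$ for some $\sigma$-finite $\nu'$ on $H\backslash G$. The $\rho$-function formula makes this explicit: for right cosets, use right Haar measure on $H$ in the inner integral; the only leftover factor is a function of $\dot x$ alone and is absorbed into $\nu$. Your remark about right-translating a left Haar measure by $s(\dot x)$ is therefore unnecessary, and in any case $s(\dot x)$ need not lie in $H$.
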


For the proof, we recall that for a locally compact second countable group $G$ and a representation 
$\pi$ of $G$, the associated \emph{Fourier space} is defined by 
\[ A_\pi (G) := \overline{\Span }\big\{ \langle \eta_1, \pi (\cdot) \eta_2 \rangle : \eta_1, \eta_2 \in \Hpi \big\} \subseteq (C^*(G))^*, \]
where $(C^*(G))^*$ is endowed with the dual topology. 
Note that $A_\pi(G) \subseteq C_b(G)$. 
For the particular case that $\pi=\lambda_G$ is the regular representation, the space $A_{\lambda_G}$ is the \emph{Fourier algebra} of $G$ and simply denoted by $A(G)$. See \cite{Arsac, kaniuth2018fourier} for background.

\begin{proof}[Proof of \Cref{thm:multiplicity}]
A classical result of C.~Herz (see \cite[Prop.~3.23 and its proof]{Arsac}) says that 
$$ A(G)\vert_H=A_{\lambda\vert_H} = A(H).$$
It follows from \cite[Prop.~3.1]{Arsac} that $\lambda_G\vert_H$ and $\lambda_H$ are quasi-equivalent representations. See also \cite{kaniuth2018fourier} for both facts. 
By \cite[Prop.~5.3.1]{Dix77}, the quasi-equivalence of $\lambda_G|_H$ and $\lambda_H$ is equivalent to the existence of $N, M\in \{1, 2, \dots, \aleph_0\}$ such that 
$$ N\cdot \lambda_H \simeq M\cdot (\lambda_G\vert_ H).$$
On the other hand, since $\lambda_H$ has infinite multiplicity, i.e., $\lambda_H \simeq \infty \cdot \lambda_H$,
it follows that $ N\cdot \lambda_H  \simeq  N\cdot (\infty\cdot  \lambda_H) \simeq \infty\cdot  \lambda_H \simeq \lambda_H$.
Moreover, since
$\lambda_G \simeq\text{ind}_H^G (\lambda_H)$, it follows that $\lambda_G\vert_H$ has infinite multiplicity as well, 
thus $\lambda_G\vert_H\simeq M\cdot (\lambda_G\vert_ H)$ by the same argument as above. 
This yields that 
$$ \lambda_H\simeq  N\cdot \lambda_H \simeq M\cdot (\lambda_G\vert_H) \simeq \lambda_G\vert_H,$$
which settles the claim.
\end{proof}

\subsection{Solvable Lie groups of exponential growth}
We first recall two important examples of nonunimodular exponential Lie groups that play 
an important role in our proof. A further key property of these examples is that they are both absolutely closed groups, that is, for any imbedding (continuous injective homomorphism) in a topological group, the image is a closed subgroup, or equivalently, any imbedding in a Lie group is open onto its image, see \cite[Thm.]{Goto73}.

\begin{example}\label{ex-groups}\hfill
    \normalfont
    
(a) The \emph{ affine group} (or the $ax+b$ \emph{group}) is the semidirect product ${\mathbb R}  \rtimes_\alpha  {\mathbb R} $ where
the action
$ \alpha \colon  {\mathbb R}\times  {\mathbb R} \to  {\mathbb R} $ is given by 
$$  \alpha (x, y)  = e^{- x} y, \quad x\in {\mathbb R}, y\in {\mathbb R}. $$
It is a nonunimodular exponential Lie group with Lie algebra generated by a basis $X, Y \in {\mathbb R}^2$ 
with nontrivial bracket 
 $$ [X, Y]=Y;$$
see also \cite[\S 4.3]{fujludw}.
This is an absolutely closed group by \cite[Thm.~1.2(a)]{Omori66}.

(b) The \emph{Gr\'elaud group} $G_\theta$, $\theta\in {\mathbb R}\setminus \{0\}$, is the semidirect product 
$ {\mathbb R} \rtimes_{\alpha_{D_\theta}} {\mathbb R}^2 $ where the action
$ \alpha_{D_\theta} \colon  {\mathbb R}\times  {\mathbb R}^2 \to  {\mathbb R}^2$ is given by 
$$  \alpha_{D_\theta} (t, x)  = e^{- t D_\theta} x, \quad t\in {\mathbb R}, x\in {\mathbb R}^2, $$
where $D_{\theta}=\left (\begin{matrix} 1  & \theta\\
-\theta & 1\end{matrix}\right )$. 
It is a nonunimodular exponential Lie group with Lie algebra generated by a basis $X, Y_1, Y_2 \in {\mathbb R}^3$ 
with nontrivial brackets
$$ [X, Y_1] = Y_1 -\theta Y_2, \; [X, Y_2] = Y_2 + \theta Y_1.$$
See also \cite[\S 4.4]{fujludw}.
This is an absolutely closed group by \cite[Thm.~1.2(c)]{Omori66}.
\end{example}

We recall that a connected Lie group $G$ has exponential growth if for every compact unit neighbourhood $U \subseteq G$, there exists $t > 1$ such that 
\[ 
\mu_G (U^n) \geq t^n
\]
for 
$ n \in \mathbb{N}$.

\begin{remark}\label{jenk}
By \cite[Thm. 1.4, Cor. 2.1]{jenkins1973growth}, a connected Lie group $G$ is of exponential growth if and only if its Lie algebra $\mathfrak{g}$ is not of type R, 
that is, there exists $X \in \mathfrak{g}$ such that spectrum of the linear map $\ad(X)$ is not contained in $ i \mathbb{R}$.
The Lie algebra $\mathfrak{g}$ of such a group has a subalgebra
 that is isomorphic to the Lie algebra of either the affine or the Gr\'elaud group by
     \cite[Lem.~1.6]{jenkins1973growth}.
    \end{remark}

\begin{lemma}\label{subg_lemma}
Let $S$ be a
connected solvable Lie group of exponential growth. 
Then there exists a closed, connected and simply connected subgroup $H$ of $S$ that is isomorphic to either the affine group or the Gr\'elaud group. 
\end{lemma}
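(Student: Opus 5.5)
The plan is to pass from the Lie algebra to the group and then use absolute closedness to get closedness. Let $\mathfrak{s}$ be the Lie algebra of $S$. Since $S$ has exponential growth, \Cref{jenk} shows that $\mathfrak{s}$ is not of type R. By \cite[Lem.~1.6]{jenkins1973growth} it then contains a subalgebra $\mathfrak{h}$ isomorphic to the Lie algebra of the affine group or of a Gr\'elaud group $G_\theta$. Let $H_0$ be the simply connected group with Lie algebra $\mathfrak{h}$. This is the affine group or $G_\theta$ as in \Cref{ex-groups}, both of which are simply connected as semidirect products of Euclidean spaces. Let $H$ be the connected (analytic) Lie subgroup of $S$ with Lie algebra $\mathfrak{h}$. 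By Lie's correspondence, there is a Lie group homomorphism $\phi\colon H_0 \to S$ with image $H$ and differential the inclusion $\mathfrak{h} \hookrightarrow \mathfrak{s}$.

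The main step is to show that $\phi$ is injective. Then $\phi$ is an immersion (injective differential) and an injective continuous homomorphism, that is, an imbedding of $H_0$ into the Lie group $S$. Absolute closedness of $H_0$ (\Cref{ex-groups}, via \cite{Goto73}, \cite{Omori66}) then gives that $H = \phi(H_0)$ is closed in $S$ and that $\phi$ is a homeomorphism onto $H$. Hence $H \cong H_0$ as Lie groups, so $H$ is closed, connected and simply connected, which is the claim.

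For injectivity, $\ker\phi$ is a discrete normal subgroup of the connected group $H_0$, hence central. So it suffices to show that the centre of $H_0$ is trivial.

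\emph{Affine group.} For $(a,b)$ central, commuting with $(x,0)$ forces $e^{-x}b=b$ for all $x$, so $b=0$. Commuting with $(0,y)$ forces $e^{-a}y=y$, so $a=0$.

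\emph{Gr\'elaud group.} Write elements as $(t,x)$ with product $(t,x)(s,y)=(t+s,\; e^{-sD_\theta}x+y)$ or the analogous convention. Centrality against all $(0,y)$ requires $e^{-tD_\theta}=I$. The eigenvalues of $D_\theta$ are $1\pm i\theta$, which have nonzero real part, so this forces $t=0$. Centrality against $(s,0)$ then requires $e^{-sD_\theta}x=x$ for all $s$, so $x=0$.

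Thus $Z(H_0)$ is trivial, $\phi$ is injective, and the argument is complete.

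I expect this injectivity/closedness step to be the only real obstacle. An analytic subgroup need not be closed in general (dense windings in tori), and that is exactly why absolute closedness of the two model groups is invoked. One should also check that "imbedding" in \cite{Goto73} means just an injective continuous homomorphism, which $\phi$ is.
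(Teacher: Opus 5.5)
Your argument is correct. It is a streamlined variant of the paper's proof, built from the same two key ingredients.

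The paper splits into two cases:
\begin{enumerate}[(i)]
\item If $S$ is simply connected, it takes the analytic subgroup with Lie algebra $\mathfrak{h}$. Closedness and simple connectedness then come from the general fact that connected subgroups of simply connected solvable Lie groups have these properties \cite[Prop.~11.2.15]{hilgert2012structure}. Absolute closedness is not used here.
\item Otherwise it passes to the universal cover $p\colon \widetilde{S}\to S$ and takes the corresponding subgroup $\widetilde{H}\leq\widetilde{S}$. By case (i), $\widetilde{H}$ is closed and simply connected, hence isomorphic to the model group. Since $\ker p$ is central and $\widetilde{H}$ has trivial centre, $p|_{\widetilde{H}}$ is injective, and absolute closedness finishes the argument.
\end{enumerate}

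Your $\phi\colon H_0\to S$ is exactly $p|_{\widetilde{H}}$ under the identification $H_0\cong\widetilde{H}$. You build it directly by integrating $\mathfrak{h}\hookrightarrow\mathfrak{s}$. You also replace ``$\ker p$ is central in $\widetilde{S}$'' by ``a discrete normal subgroup of a connected group is central''.

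What your route buys: it removes the case distinction, the universal cover, and the need for the Hilgert--Neeb result. It also uses nothing about $S$ beyond the existence of the subalgebra $\mathfrak{h}$.

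What it costs: you invoke absolute closedness even when $S$ is simply connected. In that case the paper gets closedness for free, for arbitrary subalgebras.

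Your explicit check that both model groups have trivial centre, which the paper only asserts, is also correct.
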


\begin{proof} 
Assume first that $S$ is simply connected with Lie algebra $\mathfrak{s}$.  Let $\mathfrak{h}$  be the Lie subalgebra $\mathfrak{h}$ which is isomorphic to either the affine Lie algebra or the Gr\' elaud's Lie algebra, cf. Remark~\ref{jenk}. If $H$ is the unique connected Lie subgroup of $S$ with Lie algebra $\mathfrak{h}$, then it is automatically closed and simply connected, see, e.g., \cite[Prop. 11.2.15]{hilgert2012structure}. 

If $S$ is not simply connected, 
then we let $\widetilde{S}$ be the universal cover of $S$. 
We denote by $p  \colon \widetilde{S}\to S$ the covering map, so that 
$\ker(p)$ is a discrete central subgroup of $\widetilde{S}$.  
The Lie algebra $\mathfrak{s}$ of the simply connected group $\widetilde{S}$ is the same as the Lie algebra of $S$.
Since $S$ has exponential growth,  $\mathfrak{s}$ admits a Lie subalgebra $\mathfrak{h}$ which is isomorphic to either the affine Lie algebra or Gr\' elaud's Lie algebra, cf. Remark~\ref{jenk}. 
Denote by $\widetilde{H}$ the unique closed simply connected Lie subgroup of $\widetilde{S}$ with Lie algebra $\mathfrak{h}$.
Since $\widetilde{H}$ has trivial centre,  it follows that $\widetilde{H}\cap \ker(p)=\{e_{\widetilde{S}}\}$. 
Hence, the continuous map $p\vert_{\widetilde{H}} \colon \widetilde{H}\to S$ is injective.
The group $\widetilde{H}$ is isomorphic either with the affine group or the Gr\'elaud group, thus absolutely closed, hence $H:=p(\widetilde{H})$ is a closed, connected and simply connected subgroup of $S$ which is also isomorphic either to the affine group or the Gr\'elaud group. 
\end{proof}

\subsection{Proof of Theorem \ref{thm:zerodensity_intro}}
Let $\sigma : G \times G \to \mathbb{T}$ be the cocycle of $(\pi, \Hpi)$.

We first assume that $\sigma$ is an analytic cocycle. 
Define the group $G_{\sigma} := G \times \mathbb{T}$ with group law $(x, u) (y, v) = (xy, uv \sigma(x,y))$. 
By \cite[Thm. 7.8 and Thm. 7.21]{varadarajan1985geometry}, the group $G_{\sigma}$ is a Lie group with respect to the product topology and such that the quotient map $j \colon G_{\sigma} \to G, \; (x,u) \mapsto x$ is a smooth group homomorphism. Moreover, it is connected and it is solvable as a central extension of a solvable group. 
The product measure $\mu_G \otimes \mu_{\mathbb{T}}$ forms a left Haar measure on $G_{\sigma}$, and its modular function is given by $\Delta_{G_{\sigma}} = \Delta_{G} = 1$. Since $G$ has exponential growth, it follows that also $G_{\sigma}$ has exponential growth.  

Let $H$ be the closed simply connected subgroup of $G_\sigma$ that is isomorphic with either the affine group or Gr\'elaud's group, cf. Lemma~\ref{subg_lemma}. In particular, we must have that $H \subseteq G \times \{1\}$ as $H$ is simply connected.
Since $H$ is nonunimodular and exponential (hence, type I), there exists $F_0 \in L^2 (H)$ such that $\lambda_{H} (H) F_0$ is a continuous Parseval frame for $L^2 (H)$, that is,  
\[
\| F \|^2 = \int_{H} |\langle F, \lambda_{H} (h) F_0 \rangle |^2 \; d\mu_{H} (h), \quad \text{for all} \quad F \in L^2 (H),
\] 
cf. \cite[Thm. 0.2]{fuehr2002admissible}. 
Hence, by \cite[Thm. 1.3]{freeman2019discretization} or \cite[Thm. 3.4]{bownik2026uniform}, there exists a  discrete subset $\Lambda \subseteq H$ such that $\lambda_{H} (\Lambda) F_0$ is a frame for $L^2 (H)$. See also \cite[Thm. 3.2]{fuehr2022groups}.
Moreover, since $\lambda_{H}$ has infinite multiplicities by \cite[Cor. 3.51]{fuehr2005abstract}, 
it follows from \Cref{thm:multiplicity} that there 
exists a unitary operator $U \colon L^2(H) \to L^2(G_{\sigma})$  such that 
$\lambda_{H} (x) = U^{-1} \circ \lambda_{G_{\sigma}} (x) \circ U$ for every $x\in H$.
Setting $F_0' := UF_0$,  it follows that $\lambda_{G_{\sigma}} (\Lambda) F_0'$ is a frame for $L^2 (G_{\sigma})$.  

Consider next the $1$-representation $(\pi_{\sigma}, \Hpi)$ of $G_{\sigma}$ defined by $\pi_{\sigma} (x, u) = u \pi(x)$. Then $\pi_{\sigma}$ is strongly continuous, irreducible and square-integrable. Hence, there exists $\varphi \in \Hpi \setminus \{0\}$ such that the coefficient map $C_{\varphi} : \Hpi \to L^2 (G_{\sigma}), \; C_{\varphi} f = \langle f, \pi_{\sigma} (\cdot) \varphi \rangle$ is an isometry. Moreover, we have that
\[
C_{\varphi} \pi_{\sigma} (x,u) f = \lambda_{G_{\sigma}} (x,u) C_{\varphi} f, \quad (x,u) \in G_{\sigma}. 
\]
In particular, the image space $C_{\varphi} (\Hpi)$ is a closed, left-invariant subspace of $L^2(G_{\sigma})$ and the orthogonal projection $P$ from $L^2 (G_{\sigma})$ onto $C_{\varphi} (\Hpi)$ is given by $P = C_{\varphi} C_{\varphi}^*$, see, e.g., \cite[Prop. 2.12]{fuehr2005abstract}. Note that $\lambda_{G_{\sigma}} (\Lambda) P F_0' = P \lambda_{G_{\sigma}} (\Lambda) F_0'$ is a frame for $C_{\varphi} (\Hpi)$. As such, the system 
\[
\pi_{\sigma} (\Lambda) C_{\varphi}^* P F_0' = C_{\varphi}^* \lambda_{G_{\sigma}} (\Lambda) P F_0'
\]
forms a frame for $\Hpi$.

Define $\eta := C_{\varphi}^* P F_0' \in \Hpi$ and $\Gamma := j (\Lambda) \subseteq G$, where $j : G_{\sigma} \to G$ is the smooth group homomorphism given by $(x,u) \mapsto x$.
Since $\Lambda \subseteq H \subseteq G \times \{1\}$, 
it follows that
\[
\pi(\Gamma) \eta = \pi_{\sigma} (\Lambda) \eta,
\]
which is a frame for $\Hpi$. In particular, it follows from \cite[Lem. 3.2]{papageorgiou2025counting} that $\Gamma$ must be relatively separated, i.e., $\sup_{x \in G} \# (\Gamma \cap x K) < \infty$ for any compact set $K \subseteq G$.

In order to show that $D^-(\Gamma) = 0$, note first that $H \cap \ker(j) = \{e_{G_{\sigma}} \}$. Therefore, the continuous map  
$j\vert_{H} \colon H \to G$ is injective. Since $H$ is  isomorphic either with the affine group or the Grélaud group, 
it follows that  $H' := j(H)$ is a closed subgroup of $G$ isomorphic with $H$. 
Since $H'$ is nonunimodular, we have that $H' \subsetneq G$ and thus $G / H' \cong \mathbb{R}^{\dim(G/H')}$ is noncompact, cf.  \cite[Prop. 11.2.15]{hilgert2012structure}.
Arguing by contradiction, assume that  $ D^-(\Gamma) >  0$. Let $(K_n)_{n \in \mathbb{N}}$ be a strong F\o lner sequence consisting of nonnull compact sets. Then there exists $N  \in \mathbb{N}$  such that, for all $n \geq N$ and $x \in G$, 
\[
\# (\Gamma \cap x K_n) > \frac{1}{2}D^-(\Gamma) \mu_G (K_n) > 0.
\]
Setting $K := K_N \subseteq G$, we obtain that $\# (\Gamma \cap x K) \geq 1$ for all $x \in G$. Note that this is equivalent to 
$G = \Gamma K^{-1}$. 
Since $\Gamma \subseteq H'$, we also have $G = H' K^{-1}$. If $p : G \to H' \backslash G$ denotes the canonical projection onto the right coset space, then it follows that  $H' \backslash G = p(G) = p(H' K^{-1}) =  p(K^{-1})$ is compact, which contradicts that $G/H'$ is not compact, since $G/H' \cong H' \backslash G$ as topological spaces.
Therefore, we must have that $D^-(\Gamma) = 0$. 

Lastly, we treat the case that $\sigma$ is not an analytic cocycle. By \cite[Cor. 7.30]{varadarajan1985geometry}, there exists an analytic cocycle $\sigma'$ that is similar to $\sigma$, in the sense that there exists a Borel measurable function
$a : G \to \mathbb{T}$ such that
\[
\sigma'(x,y) = \frac{a(xy)}{a(x)a(y)} \sigma(x,y).
\]
The operator $\pi'(x) = a(x) \pi(x)$ defines a $\sigma'$-representation $(\pi', \Hpi)$ of $G$. Hence, by the case of an analytic cocycle treated above, there exists $\Gamma \subseteq G$ with $D^-(\Gamma) =0$ and $\eta \in \Hpi$ such that $\pi'(\Gamma) \eta$ is a frame for $\Hpi$. 
Clearly,  $\pi(\Gamma) \eta$ is also a frame.

\qed

\section{A class of examples} \label{sec:examples}
In this section we construct a class of unimodular exponential Lie groups of exponential growth that admit square-integrable representations modulo the centre. 
In particular, these give rise to square-integrable projective representations of the corresponding quotient groups; see \Cref{sec:projective}.

 Let $N$ be a connected, simply connected nilpotent Lie group with Lie algebra ${\mathfrak n}$ of odd  dimension $n$. We assume that 
 the
 centre $Z(N)$ of $N$ has dimension $1$ and let $\{X_n, ..., X_1\}$ be a Jordan-H\"older basis for $\mathfrak{n}$ with ${\mathfrak z}({\mathfrak n}) ={\mathbb R} X_1$. We write $N=({\mathfrak n}, \cdot)$, where $\cdot$ is the Baker-Campbell-Hausdorff multiplication, so that the identity element of $N$ is $0$ and the inverse of $x$ is $-x$.
 Let $\ell_{\mathfrak n}\in {\mathfrak z}({\mathfrak n})^*$ be such that $\ell_{\mathfrak n}(X_1)=1$  
and assume that the stabiliser
\[ {\mathfrak n}(\ell_{\mathfrak n}) := \big\{ X \in \mathfrak{n} : \ell_{\mathfrak{n}} ([X,Y]) = 0, \; \forall Y \in \mathfrak{n} \big\} = {\mathfrak z}({\mathfrak n}). \]

Consider the connected, simply connected nilpotent Lie group $G_0:= {\mathbb R} \times N$ with Lie algebra ${\mathfrak g}_0={\mathbb R}\times  {\mathfrak n} =  {\mathbb R} \dot{+}{\mathfrak n}$.
 Then ${\mathfrak g}_0$ has even dimension $n+1$. 
Let 
$X_{n+1} \in \mathfrak{g}_0$ be such that it generates ${\mathbb R}\times \{0\}\subseteq \mathfrak{g}_0$. 
Using the chart 
$$ (x_{n+1}, \dots , x_1) \mapsto \exp_{G_0}\bigg( \sum_{j=1}^{n+1} x_j X_j \bigg)$$
to describe $G_0$, a point $x \in G_0$
has the form 
$ x= (x_{n+1}, \dots , x_1):= \sum\limits_{j=1}^{n+1} x_j X_j$. Here, we recall that we use the Baker-Campbell-Hausdorff multiplication and identify the group, as a set, with its Lie algebra.
Note that the centre ${\mathfrak z}({\mathfrak g}_0) ={\mathbb R} \dot{+} {\mathfrak z} ({\mathfrak n})=\text{span}\,\{X_{n+1}, X_1\}$ is 2-dimensional.
We have that 
\[ {\mathfrak g}_0(\ell_0) := \big\{ X \in \mathfrak{g}_0 : \ell_0 ([X,Y]) = 0, \; \forall Y \in \mathfrak{g}_0 \big\} = {\mathfrak z}({\mathfrak g}_0), \]
where $\ell_0$ is the extension of $\ell_ {\mathfrak n}$  to ${\mathfrak g}_0$ with $\ell_0(X_{n+1})=0$.

Assume that there is $D_d\in \text{Der}({\mathfrak g}_0)$  of the form  $D_d=\text{diag}\,\{0,  \lambda_n, \dots, \lambda_2, 0\}$, $\lambda_j \in {\mathbb R}$, $\lambda_j \ne 0$, $j=2, \dots, n$ and such that $\sum_{j=2}^n \lambda_j =\text{tr} ( D_d)=0$.
Let $E$ 
be the nilpotent map such that $E X_{n+1}= X_{1}$  and $E X_j=0$ for 
$j\in \{ 1, \dots, n\}$. 
Then $ED_d= D_dE=0$ and $E^2=0$. 
Define
$$ D= D_d+E.$$
Consider the action 
$ \alpha\colon {\mathbb R} \to \Aut(G_0)$
of ${\mathbb R}$ on $G_0$ defined by
$$ \alpha_t(x) =\ee^{tD}x, \quad x\in G_0, \, t\in {\mathbb R}.$$
Let $G$ be the semidirect product $G=G_0 \rtimes_\alpha {\mathbb R}$, with the multiplication given by 
$$ (x, t)(y, s) = (x\cdot (\ee^{tD} y), t+s), \quad x, y\in G_0, \, s, t \in {\mathbb R}.$$
The Lie algebra $\mathfrak{g}$ of $G$ is $\mathfrak{g}_0\times {\mathbb R}$ with Lie bracket 
$$ [(X, t), (Y, s)]= (tDY- sDX+[X, Y], 0), \quad (X, t), (Y, s) \in \mathfrak{g}_0\times {\mathbb R}.$$
Since $E$ is nilpotent, it follows that the spectrum of  $\mathrm{ad}_{\mathfrak{g}}(X, t)$, $(X,t)\in  \mathfrak g$, is precisely $t\sigma(D_d)$, that is, $\{0, t\lambda_2, \dots, t\lambda_n\}\subseteq \mathbb{R}$.  
Hence, $G$ is an exponential Lie group (even completely solvable). 
Since there is a real eigenvalue $\lambda_j \ne 0$, the group $G$ is not of type $R$, hence it is of exponential growth, cf. \Cref{jenk}. 
Moreover, the modular function of $G$ is 
$\Delta_G= e^{-\mathrm{tr(D_d)}} =1$, thus $G$ is also unimodular. 
The form of the derivation $D$ ensures that the
centre of ${\mathfrak g}$ is given by ${\mathfrak z}({\mathfrak g})={\mathbb R} X_1$.

If $\ell\in {\mathfrak g}^*$ is an extension of $\ell_0$, 
then
${\mathfrak g}(\ell) ={\mathfrak z}({\mathfrak g})$, so that the irreducible representation 
$\pi$ of $G$
corresponding to the coadjoint orbit $G \cdot \ell =\Ad^*(G)\ell$ is square-integrable modulo $Z(G)$,
see, e.g., \cite[\S 4.1]{rosenberg1978square}. 

In the special case that $\mathfrak{n}$ is the $3$-dimensional Heisenberg Lie algebra and the derivation $D_d = \text{diag}\{ 0, 1, -1, 0 \}$, the above construction recovers the example \cite[\S 4.13]{rosenberg1978square}.

\section*{Acknowledgements}
For J. v. V., this research was funded in whole or in part by the
Austrian Science Fund (FWF): 10.55776/PAT2545623. 

For I.B. this research was supported by a grant of the Ministry of Research, Innovation and Digitization,
CNCS–UEFISCDI, project number PN-IV-P1-PCE-2023-0264, within PNCDI IV.

\bibliographystyle{abbrv}
\bibliography{bib}

\end{document}